\documentclass[12pt]{amsart}
\pdfoutput=1
\usepackage[foot]{amsaddr}

\usepackage[utf8]{inputenc}
\usepackage{indentfirst}
\usepackage{amsmath}
\usepackage{amsthm}
\usepackage{amssymb}
\usepackage{amsfonts}
\usepackage{microtype}
\usepackage{fancyhdr}
\usepackage{tikz-cd}
\usetikzlibrary{arrows}
\usetikzlibrary{positioning}
\usepackage{xcolor}
\usepackage[colorlinks]{hyperref}
\usepackage{mathtools}
\usepackage{thmtools}

\usepackage{import}
\usepackage{xifthen}
\usepackage{pdfpages}
\usepackage{transparent}

\usepackage[
  margin=1in,
  includefoot,
  footskip=30pt,
]{geometry}

\usepackage{csquotes}

\usepackage[style=alphabetic, citestyle=alphabetic]{biblatex}
\hypersetup{
  colorlinks = true,
  citecolor = {purple!70!black},
  linkcolor = {red!60!black}
}

\newcommand{\kk}{\Bbbk}
\newcommand{\OO}{\mathcal{O}}
\newcommand{\PP}{\mathbb{P}}

\newcommand{\ZZ}{\mathbb{Z}}

\newcommand{\Spec}{\mathrm{Spec}\,}
\newcommand{\Ext}{\mathrm{Ext}}

\newcommand{\Hom}{\mathrm{Hom}}

\newcommand{\RHom}{\mathrm{RHom}}

\newcommand{\RGamma}{\mathrm{R}\Gamma\,}
\newcommand{\Pic}{\mathrm{Pic}}

\newcommand{\Coh}{\mathrm{Coh}}
\newcommand{\Dbcoh}{D^b_{\!\mathrm{coh}}}

\newcommand{\cA}{\mathcal{A}}

\newcommand{\emptyperp}{{}^\perp}

\newcommand{\iso}{\simeq}
\newcommand{\caniso}{\cong}

\newcommand{\monoarrow}{\hookrightarrow}
\newcommand{\epiarrow}{\twoheadrightarrow}

\newcommand{\im}{\operatorname{im}\,}

\newcommand{\supp}{\operatorname{supp}}

\newcommand{\Alb}{\operatorname{Alb}}

\newcommand{\red}{\operatorname{red}}

\declaretheoremstyle[
headformat=\NUMBER.\,\NAME\NOTE,
postheadspace=.5em,
spaceabove=6pt,
headfont=\normalfont\small\scshape,
notefont=\normalfont\small\mdseries, notebraces={(}{)},
bodyfont=\normalfont\itshape
]{plainswap}
\declaretheoremstyle[
headformat=\NAME\NOTE,
postheadspace=.5em,
spaceabove=6pt,
headfont=\normalfont\small\scshape,
notefont=\normalfont\small\mdseries, notebraces={(}{)},
bodyfont=\normalfont\itshape
]{nonumplainswap}
\declaretheoremstyle[
headformat=\NUMBER.\,\NAME\NOTE,
postheadspace=.5em,
spaceabove=6pt,
headfont=\normalfont\small\scshape,
notefont=\normalfont\mdseries, notebraces={(}{)},
bodyfont=\normalfont
]{definitionswap}
\declaretheoremstyle[
headformat=\NAME\NOTE,
postheadspace=.5em,
spaceabove=6pt,
headfont=\normalfont\itshape,
notefont=\mdseries, notebraces={(}{)},
bodyfont=\normalfont
]{myremark}

\declaretheorem[style=plainswap, name=Theorem, sharenumber=subsection]{theorem}

\declaretheorem[style=plainswap, numberlike=theorem, name=Proposition]{proposition}

\declaretheorem[style=plainswap, numberlike=theorem, name=Lemma]{lemma}
\declaretheorem[style=plainswap, numberlike=theorem, name=Corollary]{corollary}

\theoremstyle{definition}
\declaretheorem[style=definitionswap, numberlike=theorem, name=Definition]{definition}

\declaretheorem[style=definitionswap, numberlike=theorem, name=Example]{numberedexample}
\theoremstyle{myremark}
\newtheorem*{remark}{Remark}

\theoremstyle{remark}

\numberwithin{equation}{theorem}

\usepackage{quiver}

\title{On the support of admissible subcategories}
\author{Dmitrii Pirozhkov$^1$}
\address{$^1$Steklov Mathematical Institute of Russian Academy of Sciences, Moscow, Russia}
\email{dpirozhkov@mi-ras.ru}

\begin{document}

\begin{abstract}
Let $X$ be a smooth proper variety over an algebraically closed field of characteristic zero, and let $\mathcal{A} \subset \Dbcoh(X)$ be an admissible subcategory. Let $Z \subset X$ be the union of set-theoretical supports of all objects in $\mathcal{A}$ and assume that $Z \neq X$. We show that for any morphism from $Z$ to an abelian variety each fiber has no isolated points; this implies, for example, that $Z$ cannot be isomorphic to an abelian variety. The key input is the fact that while not all line bundles on $Z$ lift to infinitesimal thickenings of $Z$, sufficiently many do: specifically, we show that for any infinitesimal thickening $Z \subset \widetilde{Z}$ the restriction morphism $\mathrm{Pic}^0(\widetilde{Z}) \to \mathrm{Pic}^0(Z)$ on the connected components of Picard schemes induces an isogeny between Albanese group schemes of those connected components.
\end{abstract}
\maketitle

\section{Introduction}
\label{sec:introduction}

There are many examples of birational transformations of smooth algebraic varieties (blow-ups, some flops, some flips) for which we can explicitly track how the derived category of coherent sheaves changes~(\cite[Sec.~3]{bondal-orlov-survey}, \cite{kawamata-survey-1}, \cite{kawamata-survey-2}). These transformations produce many examples of semi-orthogonal decompositions of derived categories with a special property: the resulting decomposition has one ``main'' component, and all other components are admissible subcategories supported on subvarieties of lower dimension, related to the centers of the birational transformations. This natural family of interesting semi-orthogonal decompositions is a foundation for many deep theorems and alluring conjectures about derived categories of coherent sheaves.

The subvarieties contracted by birational morphisms between smooth projective varieties cannot be arbitrary; for instance, they are uniruled. The same is not necessarily true for the support of an arbitrary admissible subcategory in the derived category of coherent sheaves on a smooth projective variety. The standard example of the divergence between the minimal model program and the study of semi-orthogonal decompositions, namely the Enriques surface, can be used to construct an admissible subcategory whose support is not uniruled (see Example~\ref{ex:admissible_supported_at_enriques}).

However, the support of an admissible subcategory is not completely arbitrary. For example, any one-dimensional irreducible component of the support must be a rational curve~\cite{pirozhkov-curve-supported}. As a follow-up to that result, we prove the following constraint on the support of an admissible subcategory:

\begin{theorem}[{see Theorem~\ref{thm:main_theorem}}]
  \label{thm:main_theorem_intro}
  Let~$X$ be a smooth proper variety over an algebraically closed field~$\kk$ of characteristic zero and let~$\cA \subset \Dbcoh(X)$ be an admissible subcategory. Denote the (set-theoretical) support of~$\cA$ by~$Z \subset X$ and assume that~$Z \neq X$. Then for any morphism~$g\colon Z \to G$ from~$Z$ to an abelian variety~$G$ each fiber is a union of positive-dimensional connected components. Moreover, if~$Z^\prime \subset Z$ is an irreducible component whose intersection with other irreducible components of~$Z$ is zero-dimensional, then there are no generically finite morphisms from~$Z^\prime$ to an abelian variety.
\end{theorem}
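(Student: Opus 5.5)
The plan is to combine the rigidity of admissible subcategories with the lifting result announced in the abstract, namely that for an infinitesimal thickening $Z \subset \widetilde{Z}$ the map $\Pic^0(\widetilde{Z}) \to \Pic^0(Z)$ induces an isogeny of Albanese group schemes; I take that result as proved in the body of the paper. Only the first assertion needs real work. The second is formally analogous: an isolated intersection with other components lets one localize to $Z^\prime$ as below, and a generically finite $Z^\prime \to G$ has a finite fiber over a general point.

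\emph{Step 1 (a thickening carrying everything).} Since $\cA$ is admissible, the projection functor onto $\cA$ is of Fourier--Mukai type with kernel $P \in \Dbcoh(X \times X)$. I first show that its support lies in $Z \times Z$ set-theoretically, so $P$ is scheme-theoretically supported on some thickening $\widetilde{Z} \times \widetilde{Z}$ with $Z \subset \widetilde{Z} \subset X$ infinitesimal. Then every object of $\cA$, and in particular a classical generator $E$ of $\cA$, is a pushforward from $\widetilde{Z}$ up to taking cohomology sheaves.

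\emph{Step 2 (a group of line bundles acting trivially).} Given $g\colon Z \to G$, pull back $\Pic^0(G) = G^\vee$ to obtain a homomorphism $G^\vee \to \Pic^0(Z)$, which factors through the Albanese quotient of $\Pic^0(Z)_{\red}$. By the isogeny statement, after composing with an isogeny $G^\prime \to G^\vee$ this lifts to a homomorphism $G^\prime \to \Pic^0(\widetilde{Z})$, that is, to a family of line bundles $\mathcal{L}_t$ on $\widetilde{Z}$ restricting to $g^*M_t$ on $Z$. Tensoring by $\mathcal{L}_t$ gives a connected family of autoequivalences of $\Dbcoh(\widetilde{Z})$, and it deforms $P$ through the kernels $P \otimes (\mathcal{L}_t \boxtimes \mathcal{L}_t^{-1})$. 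I would then invoke rigidity of admissible subcategories: $\Ext^1(P,P)$ vanishes in the relevant sense, or equivalently $\cA$ has no nontrivial deformations. This should give $\cA \otimes \mathcal{L}_t = \cA$ for all $t$. Infinitesimally, each class $v \in H^1(G,\OO_G)$ yields $\tilde v \in H^1(\widetilde{Z},\OO)$ whose cup-action is compatible with the projection onto $\cA$.

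\emph{Step 3 (local contradiction).} Suppose a fiber $g^{-1}(y)$ has an isolated point $p$. Choose an object $F \in \cA$ whose support contains $p$, for instance the $\cA$-projection of $\OO_p$, and consider the part of $F$ supported on the connected component of $\widetilde{Z}$ near $p$, where $g$ is quasi-finite. The idea is to compare two things. On one side, the action of $\tilde v$ on $\Ext^\bullet(F,F)$ is constrained by the equality $\cA \otimes \mathcal{L}_t = \cA$ together with the fact that $F$ is semi-orthogonal to $\cA^\perp$, which contains skyscraper-type objects nearby. On the other side, pushing forward along the quasi-finite map near $p$ and using that $H^1(\OO_G) \to \Ext^1$ acts nontrivially on objects finite over a point of an abelian variety (a trace or Atiyah-class computation, since the line bundles $M_t$ have nontrivial deformation near $y$ in the formal sense) shows that $F$ must move. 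Concretely, the family $F \otimes \mathcal{L}_t$ stays in $\cA$ but is pairwise non-isomorphic near $p$ for the chosen generically finite direction. This contradicts the finiteness or rigidity of $\cA$-projections of points, since $\Hom$ from $\cA$ into objects supported at $p$ is controlled by finitely many data.

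I expect Step 3 to be the main obstacle: turning the global statement that $\cA$ is invariant under twisting into a contradiction that is local at an isolated point of a fiber. My first attempt would be to pass to the formal completion of $X$ along the fiber component $\{p\}$ and show that the restricted projection kernel would have to be invariant under a positive-dimensional family of twists that act freely there.
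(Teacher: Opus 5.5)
\textbf{There is a genuine gap, and it is in Step~3.} A symptom is that your argument never uses the hypothesis $Z \neq X$. That hypothesis cannot be dropped: for $\cA = \Dbcoh(G)$ with $G$ an abelian variety, the identity $G \to G$ has point fibers.

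The mechanism you propose for making $F$ ``move'' near $p$ does not exist. A priori the support of $F$ near $p$ could be just $\{p\}$, and every line bundle is trivial on an Artinian or formal neighbourhood of a point. So twisting by $\mathcal{L}_t$ or by $g^*M_t$ acts trivially on such an object. Likewise the map $H^1(G,\OO_G) \to \Ext^1(F,F)$ is zero: for $F$ supported at a point it factors through $H^1$ of an affine neighbourhood. Your claim that this action is nontrivial on objects finite over a point of $G$ is therefore false. For the same reason, your fallback of passing to the formal completion at $p$ kills all the twists and cannot produce a contradiction. Twists only detect positive-dimensional pieces of the support.

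Step~2 also needs correcting. $\Ext^1(P,P)$ need not vanish; for $\cA = \Dbcoh(X)$ it is $HH^1(X)$. Moreover, $\mathcal{L}_t$ does not extend to $X$, so ``$\cA \otimes \mathcal{L}_t = \cA$'' is not meaningful as stated. The statement you actually need concerns the individual object: if $A_p \iso i_*E$ with $E \in \Dbcoh(\widetilde{Z})$, then every cohomology sheaf of $E$ is $\Pic^0(\widetilde{Z})$-invariant \cite[Prop.~6.3]{pirozhkov-curve-supported}.

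\textbf{The missing idea} is \cite[Lem.~3.10]{pirozhkov-curve-supported}: when $Z \neq X$, the support of every object of $\cA$ is a union of positive-dimensional connected components. This is exactly where $Z \neq X$ enters, and it makes the local-to-global step work.

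\begin{enumerate}
\item By Lemma~3.10, some cohomology sheaf $\mathcal{H}$ of $E$ has in its support an irreducible curve $C$ passing through $p$.
\item Since $p$ is isolated in $g^{-1}(g(p))$, the composite $\widetilde{C} \to Z \to G$ from the normalization is finite. Hence $\Pic^0(G) \to \Pic^0(\widetilde{C})$ is nonzero.
\item Because $\Pic^0(\widetilde{C})$ is an abelian variety, the map $\Pic^0(\widetilde{Z}) \to \Pic^0(\widetilde{C})$ factors through $\Alb(\Pic^0(\widetilde{Z})) \to \Alb(\Pic^0(Z))$. That map is surjective by the isogeny result, so the image in $\Pic^0(\widetilde{C})$ is positive-dimensional.
\item The pullback of $\mathcal{H}$ to $\widetilde{C}$ has full support and is invariant under this image. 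That is impossible: the determinant of its torsion-free quotient would be a line bundle invariant under a positive-dimensional subgroup of $\Pic^0(\widetilde{C})$.
\end{enumerate}

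So $p$ would have to be isolated in $\supp(A_p)$, contradicting Lemma~3.10.

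For the second claim, ``localizing to $Z^\prime$'' also needs an input you did not state. The restriction $\Pic^0(Z) \to \Pic^0(Z^\prime)$ is surjective: glue a line bundle on $Z^\prime$ with the trivial one on the other components across the finitely many intersection points, and do this in families. Only then do line bundles pulled back from $G$ to $Z^\prime$ reach $\Pic^0(\widetilde{Z})$ up to isogeny.
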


The non-existence proved in Theorem~\ref{thm:main_theorem_intro} relies on the generalization of the $\Pic^0$-invariance of admissible subcategories proved originally by Kawatani and Okawa~\cite{kawatani-okawa}, adapted in a somewhat subtle way to Picard schemes of the infinitesimal thickenings of $Z$. This result is a generalization of the arguments used in \cite[Thm.~6.5]{pirozhkov-curve-supported} to show that any one-dimensional irreducible component of $Z$ is a rational curve. We heavily rely on the machinery developed in the previous paper~\cite{pirozhkov-curve-supported}.

The novel insight that allows us to work with higher-dimensional varieties is the following observation about line bundles on infinitesimal thickenings. In the one-dimensional case any line bundle on a reduced curve extends to a line bundle on an arbitrary infinitesimal thickening since the target of the obstruction map is a trivial abelian group. As soon as the dimension of $Z$ is at least two, obstructions in general exist. However, the obstruction map from $\Pic(Z)$ is not just a homomorphism of abelian groups: both the source and the target are actually (sets of $\kk$-points of) group schemes, and the map is a morphism of group schemes. Each connected commutative group scheme $G$ of finite type over a field has the largest proper quotient which is an abelian variety that we denote by $\Alb(G)$. We show that many line bundles in the connected component of the Picard scheme lift to arbitrary infinitesimal thickenings:

\begin{proposition}[{see Corollary~\ref{cor:picard_albanese_isogeny}}]
  \label{prop:picard_albanese_isogeny_intro}
  Let $\kk$ be an algebraically closed field of characteristic zero. Let $j\colon Z \monoarrow \widetilde{Z}$ be an infinitesimal thickening of proper schemes. Consider the restriction morphism $j^*\colon \Pic^0(\widetilde{Z}) \to \Pic^0(Z)$ on the connected components of Picard schemes. The induced map of Albanese group schemes
  \[ \Alb(\Pic^0(\widetilde{Z})) \to \Alb(\Pic^0(Z)) \]
  is an isogeny, i.e., a surjective map with finite kernel.
\end{proposition}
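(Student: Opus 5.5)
We reduce to square-zero thickenings and compare both Albanese quotients through the kernel and the obstruction map. The thickening $Z \subset \widetilde{Z}$ is a finite composition of square-zero thickenings $Z = Z_0 \subset Z_1 \subset \dots \subset Z_n = \widetilde{Z}$, given by the powers of the nilpotent ideal. A composition of isogenies of abelian varieties is an isogeny, and $\Alb$ is functorial. So it suffices to treat $Z \subset \widetilde{Z}$ with ideal sheaf $I$ satisfying $I^2 = 0$. Then $I$ is a coherent $\OO_Z$-module, and the truncated exponential $1 + I \to \OO_{\widetilde{Z}}^*$ gives a short exact sequence of sheaves of groups on the common topological space
\[ 0 \to I \to \OO_{\widetilde{Z}}^* \to \OO_Z^* \to 1. \]
Its cohomology sequence gives
\[ H^1(Z, I) \to \Pic(\widetilde{Z}) \to \Pic(Z) \xrightarrow{\ \mathrm{ob}\ } H^2(Z, I). \]
Working with $T$-valued points (base change to $Z_T \subset \widetilde{Z}_T$, using flat base change for the coherent cohomology of $I$), this becomes an exact sequence of commutative group schemes. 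Here $H^1(Z,I)$ and $H^2(Z,I)$ are viewed as vector group schemes $\mathbb{G}_a^{N}$, and every map is a homomorphism of group schemes.

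Next we handle the kernel. Restrict $j^*$ to the identity components, $j^*\colon \Pic^0(\widetilde{Z}) \to \Pic^0(Z)$. The kernel of $\Pic(\widetilde{Z}) \to \Pic(Z)$ is a quotient of the vector group $H^1(Z,I)$. In characteristic zero this quotient is again a vector group, so it is unipotent. Hence $\ker(j^*|_{\Pic^0})$ is an extension of a finite group by a unipotent group, and it admits no nonzero homomorphism onto an abelian variety. Now let $K$ be the kernel of $\Alb(\Pic^0(\widetilde{Z})) \to \Alb(\Pic^0(Z))$. Its identity component is the image of an affine group in an abelian variety, hence a point, so $K$ is finite. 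Concretely, one uses the Chevalley structure $0 \to L \to G \to \Alb(G) \to 0$ with $L$ affine, together with the fact that the image of the affine kernel of $j^*$ in $\Alb$ is affine and connected inside an abelian variety.

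The main obstacle is surjectivity: we must show that the image of $j^*$ on identity components maps onto $\Alb(\Pic^0(Z))$. Let $H = \ker(\mathrm{ob}|_{\Pic^0(Z)})$. This is the image of $\Pic(\widetilde Z)\cap$(preimage of $\Pic^0(Z)$), and its identity component is the image of $\Pic^0(\widetilde{Z})$ up to finite index. Since $\mathrm{ob}$ is a homomorphism, $\Pic^0(Z)/H$ embeds into the vector group $H^2(Z,I)$, so it is a connected unipotent (indeed vector) group. We now use the universal property of $\Alb$: an abelian variety admits no nonzero homomorphism to a unipotent group, and conversely a unipotent group has no nontrivial abelian variety quotient. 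Consequently the composite $H^0 \to \Pic^0(Z) \to \Alb(\Pic^0(Z))$ is surjective. Indeed, its cokernel $A'$ is an abelian-variety quotient of $\Pic^0(Z)/H^0$, which is an extension of a finite group by a vector group; in characteristic zero such a group has no nonzero abelian quotient, so $A' = 0$. Therefore $\Alb(\Pic^0(\widetilde{Z})) \to \Alb(H^0) \to \Alb(\Pic^0(Z))$ is surjective, which completes the isogeny.

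The delicate points to check are the following. First, the sequence is exact as group schemes, and not merely on $\kk$-points. Second, $\mathrm{ob}$ is a morphism of schemes; this can be checked by representing it via the deformation-theoretic obstruction class in families. Third, $\Pic^0$ is of finite type because $Z$ is proper, so $\Alb$ is defined. Characteristic zero is used exactly to ensure that quotients and subgroups of vector groups are vector groups, with no extra finite non-reduced phenomena.
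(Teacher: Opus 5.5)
Your skeleton is the paper's: the group-scheme form of $H^1(Z,I)\to\Pic(\widetilde Z)\to\Pic(Z)\to H^2(Z,I)$, unipotence of the kernel, the cokernel being a vector group up to a finite group, and $\Alb$ killing the affine pieces. The gap is the clause ``its identity component is the image of $\Pic^0(\widetilde Z)$ up to finite index''. You assert it without argument, yet it is the only non-formal step, and it is exactly where the paper does its work. Exactness only tells you that $H=\ker(\mathrm{ob}|_{\Pic^0(Z)})$ is the image of $P=(j^*)^{-1}(\Pic^0(Z))$. This $P$ is an open and closed subgroup of $\Pic(\widetilde Z)$ that may a priori contain infinitely many cosets of $\Pic^0(\widetilde Z)$. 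The map $P\to H$ is not quasi-compact, so the closed-image theorem says nothing about it (compare $\ZZ\to\mathbb{G}_a$). On $\kk$-points you only learn that $H(\kk)$ is a countable union of translates of $K'(\kk)$, where $K'=j^*(\Pic^0(\widetilde Z))$. Over a countable field such as $\overline{\mathbb{Q}}$ this does not exclude $\dim K'<\dim H$. In that case nothing prevents $H^0/K'$ from having a nonzero abelian variety quotient, and surjectivity onto $\Alb(\Pic^0(Z))$ would fail to follow.

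The paper closes this by first assuming $\kk$ uncountable. Since $\Pic(\widetilde Z)$ has countably many components and taking $\kk$-points is exact in characteristic zero, the finite type group $H/K'$ has countably many $\kk$-points and is therefore finite. The general case follows by passing to an uncountable algebraically closed extension, since connectedness, finiteness and unipotence are stable under such extensions. Alternatively, you could use the sheaf-level exactness that you list as a delicate point but never exploit. If $\Pic(\widetilde Z)\to\ker(\mathrm{ob})$ is an epimorphism of \'etale (hence fppf) sheaves, then $\ker(\mathrm{ob})\cong\Pic(\widetilde Z)/U$, with $U$ the image of $H^1(Z,I)$. The quotient morphism is flat and locally of finite presentation, hence open. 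So the image of the open subgroup $\Pic^0(\widetilde Z)$ is an open, hence closed, connected subgroup, i.e.\ exactly $H^0$.

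With this step supplied, the rest works. Your reduction to the square-zero case by composing isogenies is in fact simpler than the paper's, which composes the finite-by-unipotent statements and needs Brion's structure theory for that. One small correction in your kernel step: the kernel of $\Alb(j^*)$ is the image of the affine group $(j^*)^{-1}(L)$, where $L=\ker(\Pic^0(Z)\to\Alb(\Pic^0(Z)))$, not merely the image of $\ker j^*$.
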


In particular, if $\Pic^0(Z)$ is an abelian variety, e.g., if $Z$ happens to be smooth, then the map $j^*\colon \Pic^0(\widetilde{Z}) \to \Pic^0(Z)$ is surjective, and all line bundles in the connected component of the Picard scheme lift to arbitrary infinitesimal thickenings.

\textbf{Outline}. In Section~\ref{sec:picard} we study line bundles on infinitesimal thickenings of proper schemes and prove Proposition~\ref{prop:picard_albanese_isogeny_intro}. In Section~\ref{sec:main_proof} we prove the main result of this note, Theorem~\ref{thm:main_theorem_intro}.

\textbf{Acknowledgments}. I thank Michel Brion for useful discussion regarding Prop.~\ref{prop:picard_albanese_isogeny_intro}. This work was performed at the Steklov International Mathematical Center and supported by the Ministry of Science and Higher Education of the Russian Federation (agreement no. 075-15-2025-303).

\section{Line bundles on infinitesimal thickenings}
\label{sec:picard}

\begin{definition}
  We say that a morphism $j\colon Z \monoarrow \widetilde{Z}$ of schemes is an \textit{infinitesimal thickening} if both schemes are of finite type over a field $\kk$, the morphism $j$ is a closed immersion, and the induced map on the reduced subschemes $j^{\red}\colon Z^{\red} \to \widetilde{Z}^{\red}$ is an isomorphism.
\end{definition}

As usual, we say that an infinitesimal thickening $j\colon Z \monoarrow \widetilde{Z}$ is a \textit{square-zero extension} if the ideal sheaf $\mathcal{I}_Z$ on $\widetilde{Z}$ corresponding to the closed subscheme $Z$ satisfies $\mathcal{I}^2 = 0$ as a subsheaf of $\OO_{\widetilde{Z}}$. Recall that the topological spaces underlying $Z$ and $\widetilde{Z}$ are canonically identified, and the square-zero condition implies, in particular, that the coherent sheaf $\mathcal{I}$ on $\widetilde{Z}$ is naturally equipped with the structure of a $\OO_Z$-module making it a coherent sheaf on $Z$.

In this section we study how line bundles on $Z$ differ from line bundles on $\widetilde{Z}$. We start with the basic observation.

\begin{lemma}[{see, e.g., \cite[Tag~0C6Q]{stacks-project}}]
  \label{lem:picard_les_for_square_zero}
  Let $j\colon Z \monoarrow \widetilde{Z}$ be a square-zero extension with ideal sheaf $\mathcal{I}$. Then there exists a long exact sequence of abelian groups
    \begin{equation}
      \label{eq:picard_les_for_square_zero}
      \begin{aligned}
            0 & \to H^0(Z, \mathcal{I}) \to H^0(\widetilde{Z}, \OO_{\widetilde{Z}}^*) \to H^0(Z, \OO_Z^*) \to \\
        & \to H^1(Z, \mathcal{I}) \to \Pic(\widetilde{Z}) \to \Pic(Z) \to \\
        & \to H^2(Z, \mathcal{I}) \to \dots
      \end{aligned}
    \end{equation}
\end{lemma}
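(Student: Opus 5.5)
The plan is to obtain the sequence~\eqref{eq:picard_les_for_square_zero} as the long exact cohomology sequence of a short exact sequence of sheaves of abelian groups on the common underlying topological space $|Z| = |\widetilde{Z}|$, namely
\[
  0 \to \mathcal{I} \xrightarrow{\ f \mapsto 1 + f\ } \OO_{\widetilde{Z}}^* \to \OO_Z^* \to 1,
\]
where $\mathcal{I}$ carries its additive structure and the unit sheaves carry the multiplicative one. The remaining work is then to identify the cohomology groups of these sheaves with the groups appearing in the statement.

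First I will check that the left map is a homomorphism of sheaves of groups. Since $\mathcal{I}^2 = 0$, we have $(1+f)(1+g) = 1 + f + g$, and $1+f$ is a unit with inverse $1-f$. The map is injective for trivial reasons.

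Next I will check exactness in the middle and on the right. A local unit $u$ of $\OO_{\widetilde{Z}}$ maps to $1$ in $\OO_Z$ exactly when $u - 1 \in \mathcal{I}$, which gives exactness in the middle. For surjectivity, I work on stalks. If $\bar u \in \OO_{Z,x}^*$, choose any lift $u \in \OO_{\widetilde{Z},x}$. Then $u$ is a unit, because $\mathcal{I}_x$ lies in the maximal ideal of the local ring (it is nilpotent). Equivalently, $u v = 1 + f$ with $f \in \mathcal{I}$, and $1+f$ is invertible. This is the step that uses nilpotence.

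Finally I will identify the cohomology groups. $H^i$ of $\mathcal{I}$ on $\widetilde{Z}$ agrees with $H^i(Z, \mathcal{I})$, since the topological spaces coincide and cohomology of a sheaf of abelian groups depends only on the space. The standard identification $H^1(Y, \OO_Y^*) \caniso \Pic(Y)$ then gives the middle terms, and the long exact sequence follows.

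There is no real obstacle here. The only point needing a little care is to say precisely that $\mathcal{I}$ is viewed as a sheaf of abelian groups, so that its cohomology over $\widetilde{Z}$ and over $Z$ coincide. In characteristic zero one could alternatively use the truncated exponential $f \mapsto \exp(f) = 1+f$, but the direct formula above works in general.
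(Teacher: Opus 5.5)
Your proposal is correct and follows the same route as the paper. Both use the short exact sequence $0 \to \mathcal{I} \to \OO_{\widetilde{Z}}^* \to \OO_Z^* \to 1$ with $f \mapsto 1+f$, get surjectivity from the nilpotence of $\mathcal{I}$, and take the long exact cohomology sequence with $H^1(\OO^*) \caniso \Pic$; you only spell out the stalkwise surjectivity and the identification of $H^i(\widetilde{Z}, \mathcal{I})$ with $H^i(Z, \mathcal{I})$ in more detail.
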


\begin{proof}
  The restriction along $j$ induces a canonical morphism $\OO_{\widetilde{Z}}^* \to j_* \OO_Z^*$ of sheaves of abelian groups. It is easy to check that this map is a surjection of sheaves for any infinitesimal thickening. Since $j$ is a square-zero extension, the kernel of $\OO_{\widetilde{Z}}^* \to j_* \OO_Z^*$ is locally generated by invertible functions of the form $1 + f$ for a (nilpotent) function $f \in \mathcal{I}$. Thus the kernel of this map of sheaves of abelian groups happens to be isomorphic to the coherent sheaf $\mathcal{I}$.
\end{proof}

We are interested in the Picard groups not just as abelian groups, but as algebraic groups. The Picard scheme exists for any proper scheme:

\begin{theorem}[{Murre~\cite{murre-representability}, Oort~\cite{oort-picard}}]
  \label{thm:picard_exists}
  Let $X$ be a proper scheme over a field $\kk$. Then $\Pic(X)$ has a structure of a commutative group scheme over $\kk$, which is locally of finite type and has countably many connected components.
\end{theorem}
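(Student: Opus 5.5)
This theorem is cited from Murre and Oort rather than proved in the paper, so the plan is to reconstruct the standard argument. It runs in three stages: first represent the Picard functor, then establish local finite type, then count connected components.

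\emph{Representability.} I would work with the fppf sheafification of $T \mapsto \Pic(X \times T)$. When $X$ is projective and geometrically integral with a rational point, Grothendieck's construction applies: it realizes the Picard scheme via the Hilbert scheme of effective Cartier divisors, using Castelnuovo--Mumford regularity, and the group structure is induced by tensor product. For an arbitrary proper $X$ this construction is unavailable. Here I would follow Murre's approach instead. One verifies Artin-type axioms for the functor: it is a sheaf, it is locally of finite presentation, and it satisfies effective pro-representability. One also verifies that the deformation theory is controlled by $H^1(X,\OO_X)$ for tangents and $H^2(X,\OO_X)$ for obstructions, both of which are finite-dimensional because $X$ is proper. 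Pro-representability comes from Grothendieck's existence theorem.

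Murre's general criterion for group functors then gives representability by a group scheme locally of finite type. Commutativity is automatic, since the group law is $\otimes$. Oort's contribution covers non-reduced or non-normal $X$. One first passes to $X^{\red}$ using the exact sequence of Lemma~\ref{lem:picard_les_for_square_zero}, iterated along the nilpotent filtration. One then reduces to the normalization, where the kernel is built from $\mathbb{G}_a$'s and $\mathbb{G}_m$'s.

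\emph{Countability of components.} The tangent space at the identity is $H^1(X,\OO_X)$, which is finite-dimensional. This gives a connected component $\Pic^0(X)$ of finite type. The component group $\Pic(X)/\Pic^0(X)$ is then the Néron--Severi group. To show it is countable, I would reduce to projective $X$ by Chow's lemma and descent, and compare with the normalization via the same exact sequences. Over a countable field of definition, the set of components is in bijection with a finitely generated group, so it is countable. In fact Néron--Severi is finitely generated by the theorem of the base, though countability is all that is needed here.

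\emph{Main obstacle.} The hard step is representability for a general proper, possibly non-projective or non-reduced, scheme. This requires Murre's criterion together with Oort's dévissage, and for the purposes of this paper I would simply cite those references.
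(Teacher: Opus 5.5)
Your proposal agrees with the paper, which gives no proof of its own and simply cites Murre and Oort, as you also end up doing. The one slip in your sketch is harmless. Finite-dimensionality of $H^1(X,\OO_X)$ is not what makes $\Pic^0(X)$ of finite type; that holds for the identity component of any group scheme locally of finite type over a field. In any case, counting components only needs $\Pic^0(X)$ to be open together with the theorem of the base, or simply a count of line bundles over a countable field of definition.
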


The long exact sequence (\ref{eq:picard_les_for_square_zero}) shows that the difference between $\Pic(Z)$ and $\Pic(\widetilde{Z})$ is controlled by the cohomology of the coherent sheaf $\mathcal{I}$ on $Z$. The group scheme structure on the cohomology space of a coherent sheaf is, naturally, just the usual vector space structure. This observation leads to the following result. For an overview of general notions related to (commutative) group schemes, such as the notion of a unipotent group, see, e.g., \cite{brion-up-to-isogeny}.

\begin{proposition}
  \label{prop:picard_difference_is_unipotent}
  Let $\kk$ be an algebraically closed field of characteristic zero. Let $j\colon Z \monoarrow \widetilde{Z}$ be an infinitesimal thickening of proper schemes. Consider the restriction morphism
  \[
    j^*\colon \Pic^0(\widetilde{Z}) \to \Pic^0(Z)
  \]
  on the connected components of Picard schemes. Then the kernel of $j^*$ is a unipotent algebraic group, and the cokernel of $j^*$ is a finite extension of a unipotent algebraic group.
\end{proposition}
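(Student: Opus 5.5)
We reduce to the square-zero case and then read off the claim from the long exact sequence of Lemma~\ref{lem:picard_les_for_square_zero}, upgraded to group schemes. Any infinitesimal thickening $Z \monoarrow \widetilde{Z}$ with ideal $\mathcal{J}$ (nilpotent, say $\mathcal{J}^n = 0$) factors as a chain of square-zero extensions $Z = Z_1 \monoarrow Z_2 \monoarrow \dots \monoarrow Z_n = \widetilde{Z}$, where $Z_k$ is cut out by $\mathcal{J}^k$. In characteristic zero the class of unipotent commutative algebraic groups is closed under subgroups, quotients and extensions. It is also closed under the property ``finite extension of unipotent'' up to passing to connected components. So it should suffice to treat one square-zero step and then compose: the kernel of a composite $\Pic^0(Z_n)\to\Pic^0(Z_1)$ is an iterated extension of subgroups of the kernels of the individual steps. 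For the cokernel we use the exact sequence $\ker(b)\to \operatorname{coker}(a)\to\operatorname{coker}(ba)\to\operatorname{coker}(b)\to 0$. This gives $\operatorname{coker}(ba)$ as an extension of $\operatorname{coker}(b)$ by a quotient of $\operatorname{coker}(a)$, and both of these are finite-by-unipotent. Here I must check that extensions of finite-by-unipotent groups remain finite-by-unipotent; this holds since the identity component of such a group is unipotent in characteristic zero (unipotent groups are connected, and a connected extension of a finite group by a unipotent one is unipotent).

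For a single square-zero extension with ideal $\mathcal{I}$, I would promote the sequence~\eqref{eq:picard_les_for_square_zero} to an exact sequence of commutative group schemes
\[
H^1(Z,\mathcal{I})\otimes\mathbb{G}_a \to \Pic(\widetilde{Z}) \to \Pic(Z) \to H^2(Z,\mathcal{I})\otimes\mathbb{G}_a ,
\]
where the outer terms are the vector groups attached to the finite-dimensional cohomology spaces. This uses the functorial version of the sequence over arbitrary bases $\Spec R$ (flat base change of $\mathcal{I}$ along $\Spec R\to\Spec\kk$, together with cohomology and base change for the relevant degrees), and compatibility with the representing objects of Theorem~\ref{thm:picard_exists}. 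Since $\kk$ is algebraically closed of characteristic zero, all group schemes involved are reduced, so exactness can be checked on $\kk$-points, where it is exactly Lemma~\ref{lem:picard_les_for_square_zero}. The obstruction map $\Pic(Z)\to H^2$ must be shown to be a morphism of schemes; I would do this via the boundary map of the \'etale/fppf sheafified sequence $0\to\mathcal{I}\to\OO_{\widetilde{Z}}^*\to j_*\OO_Z^*\to 0$ applied to relative Picard functors.

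Granting this, the kernel of $\Pic(\widetilde{Z})\to\Pic(Z)$ is a quotient of the vector group $H^1(Z,\mathcal{I})$, hence unipotent. The kernel of the restriction to $\Pic^0$ is a closed subgroup of it, hence unipotent as well. The image of $\Pic(Z)\to H^2(Z,\mathcal{I})$ restricted to $\Pic^0(Z)$ is a connected subgroup of a vector group, hence unipotent. So $\Pic^0(Z)/j^*\bigl(\Pic(\widetilde{Z})\cap\text{preimage of }\Pic^0(Z)\bigr)$ is unipotent. It remains to compare $j^*(\Pic^0(\widetilde{Z}))$ with the image of the full preimage $(j^*)^{-1}(\Pic^0(Z))$. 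That preimage is an extension of a closed subgroup of $\Pic^0(Z)$ by the unipotent kernel, so it is of finite type with $\Pic^0(\widetilde{Z})$ as identity component, and it has finitely many components. This accounts for the finite part of the cokernel.

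The main obstacle is the scheme-theoretic upgrade of the long exact sequence, specifically that the connecting maps are algebraic morphisms of group schemes. If the fppf-sheaf argument is awkward, the fallback is weaker: only use that the image $j^*(\Pic^0(\widetilde{Z}))$ is a closed subgroup, which holds because homomorphisms of algebraic groups have closed image. Then argue on $\kk$-points that the quotient $\Pic^0(Z)/\im j^*$ injects, modulo a finite group, into a group isomorphic as an abstract group to a subgroup of a $\kk$-vector space, i.e.\ it is torsion-free up to finite index. A connected commutative algebraic group in characteristic zero whose $\kk$-points are torsion-free must be unipotent, since tori and abelian varieties have torsion points. This fallback seems robust and avoids constructing the obstruction morphism explicitly.
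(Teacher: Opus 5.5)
\textbf{There is a genuine gap: the finiteness of $K/K'$ is asserted, not proved.} Most of your outline matches the paper. This includes the reduction to square-zero steps, the four-term sequence $H^1(Z,\mathcal{I}) \to \Pic(\widetilde{Z}) \to \Pic(Z) \to H^2(Z,\mathcal{I})$, the kernel of $j^*$ as a quotient of a vector group, and $\Pic^0(Z)/K \subset H^2(Z,\mathcal{I})$ with $K := \ker(\Pic^0(Z)\to H^2(Z,\mathcal{I}))$. The gap sits at the one step that needs real work: showing $K/K'$ is finite, where $K' := j^*(\Pic^0(\widetilde{Z}))$.

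Put $P := (j^*)^{-1}(\Pic^0(Z)) \subset \Pic(\widetilde{Z})$. Then $K(\kk)/K'(\kk) \caniso P(\kk)/\Pic^0(\widetilde{Z})(\kk)$. You assert that $P$ ``is an extension of a closed subgroup of $\Pic^0(Z)$ by the unipotent kernel, so it is of finite type''. That is circular:
\begin{itemize}
\item A priori $P$ is only locally of finite type, with countably many components.
\item The only exactness you invoke is on $\kk$-points.
\item A homomorphism from such a group with finite-type kernel, hitting every $\kk$-point of a closed subgroup $K$, does not present $P$ as an extension of $K$ as algebraic groups unless $P \to K$ is quasi-compact. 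That quasi-compactness is exactly what has to be proved.
\end{itemize}
Example: over $\overline{\mathbb{Q}}$, the constant group scheme on the abstract group $(\overline{\mathbb{Q}},+)$ maps to $\mathbb{G}_a$ with trivial kernel and bijectively on $\overline{\mathbb{Q}}$-points, yet has infinitely many components.

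The fallback has the same hole. ``Injects, modulo a finite group'' already presupposes that $\pi_0(P)$ is finite. If you only know $\pi_0(P)$ is countable, the torsion argument fails: the torsion of a torus or an abelian variety, $(\mathbb{Q}/\ZZ)^r$, is itself countable. So nothing in your abstract-group bookkeeping stops $\pi_0(P)$ from absorbing it, for example $A(\kk)_{\mathrm{tors}} \subset A(\kk)$ with torsion-free quotient.

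\textbf{How the paper supplies the missing step.} It uses a cardinality argument. First assume $\kk$ is uncountable. Both $K$ and $K'$ are closed; for $K'$ this uses that $\Pic^0(\widetilde{Z})$ is quasi-compact. The index $K(\kk)/K'(\kk)$ is countable because $\Pic(\widetilde{Z})$ has countably many components. A finite-type group over an uncountable algebraically closed field with only countably many points is finite. The general case then follows by base change to an uncountable algebraically closed field.

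\textbf{Two other repairs that fit your outline.}
\begin{itemize}
\item Keep the four-term sequence exact as \'etale sheaves. Sheafification is exact, and Lemma~\ref{lem:picard_les_for_square_zero} applies over every affine base. Then $P \to K$ is an \'etale epimorphism, and the quasi-compact $K$ is covered \'etale-locally by finitely many sections landing in finitely many components of $P$.
\item Use finite generation of the N\'eron--Severi group of $\widetilde{Z}$. This makes the torsion of $\pi_0(P)$ finite and rescues your fallback.
\end{itemize}
Your handling of composites through the kernel--cokernel sequence is fine. It is a reasonable substitute for the paper's appeal to Brion's structure results.
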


\begin{proof}
  This is a variant of the argument from the classical paper~\cite{oort-picard}. Let $f\colon \widetilde{Z} \to \Spec \kk$ be the structure morphism. First we consider a special case: we assume that $j$ is a square-zero extension and that the base field $\kk$ is uncountable. Since any base change of a square-zero extension is a square-zero extension, the long exact sequence~(\ref{eq:picard_les_for_square_zero}) induces an exact sequence of sheaves on the large étale site of $\kk$:
  \[ R^1 f_* \mathcal{I} \to R^1 f_* \OO_{\widetilde{Z}}^* \to R^1 f_* \OO_{Z}^* \to R^2 f_* \mathcal{I} \]
  The cohomology of a coherent sheaf over a field commutes with base change, so the two étale sheaves $R^1 f_* \mathcal{I}$ and $R^2 f_* \mathcal{I}$ are represented by vector spaces $H^1(Z, \mathcal{I})$ and $H^2(Z, \mathcal{I})$. By Theorem~\ref{thm:picard_exists} two étale sheaves in the middle are also representable. By the Yoneda lemma this implies that we have an exact sequence of representing group schemes
    \begin{equation}
      \label{eq:picard_scheme_les}
      H^1(Z, \mathcal{I}) \to \Pic(\widetilde{Z}) \to \Pic(Z) \to H^2(Z, \mathcal{I})
    \end{equation}
  where all maps are homomorphisms of group schemes.

  The group scheme $H^1(Z, \mathcal{I})$ is connected, so its image is contained in $\Pic^0(\widetilde{Z})$. Thus the kernel of the restriction map $j^*\colon \Pic^0(\widetilde{Z}) \to \Pic^0(Z)$ is isomorphic to some quotient group scheme of $H^1(Z, \mathcal{I})$. By~\cite[Ex.~XVII, Prop.~2.2]{sga3-2} any algebraic quotient of a unipotent algebraic group is unipotent. Thus $\ker j^*$ is unipotent.

  Consider now two subgroups:
  \[ K := \ker(\Pic^0(Z) \to H^2(Z, \mathcal{I})) \quad \text{and} \quad K^\prime := \im(\Pic^0(\widetilde{Z}) \to \Pic^0(Z)). \]
  Both of them are (Zariski) closed subgroups of~$\Pic^0(Z)$: for~$K$ this is automatic since the kernel is the preimage of the closed point~$0 \in H^2(Z, \mathcal{I})$, for~$K^\prime$ we use the fact that the image of a \textit{quasi-compact} homomorphism of group schemes locally of finite type is closed~\cite[Ex.~VI${}_B$,~Prop.~1.2]{sga3-1}, and after restricting to the connected component~$\Pic^0(\widetilde{Z}) \subset \Pic(\widetilde{Z})$ the quasi-compactness condition is satisfied. The exactness of the sequence~(\ref{eq:picard_scheme_les}) implies that~$K^\prime$ is contained in~$K$ and that
  \[ K = \im (\Pic(\widetilde{Z}) \to \Pic(Z)) \cap \Pic^0(Z). \]

  By Theorem~\ref{thm:picard_exists} we know that~$\Pic(\widetilde{Z})$ has at most countably many connected components, so on the level of~$\kk$-points the index of the subgroup~$K^\prime (\kk) \subset K(\kk)$ is at most countable. However, since passing to the group of points over an algebraically closed field~$\kk$ of characteristic zero is an exact functor, this means that the finite type group scheme~$K / K^\prime$ has at most countably many~$\kk$-points. We have assumed that the base field~$\kk$ is uncountable, so this can happen only when the group scheme~$K / K^\prime$ is finite over~$\kk$. Then the cokernel of the morphism~$j^*\colon \Pic^0(\widetilde{Z}) \to \Pic^0(Z)$, namely the quotient~$\Pic^0(Z) / K^\prime$, is an extension of a closed algebraic subgroup of~$H^2(Z, \mathcal{I})$, which is unipotent by~\cite[Ex.~XVII, Prop.~2.2]{sga3-2}, by a finite group scheme~$K / K^\prime$, as we wanted to show.

  Now consider an arbitrary infinitesimal thickening $j\colon Z \monoarrow \widetilde{Z}$, still keeping the assumption that the base field $\kk$ is uncountable. Any such $j$ is a composition of square-zero extensions. An iterated extension of unipotent groups is itself a unipotent group by~\cite[Ex.~XVII, Prop.~2.2]{sga3-2}, so the claim about the kernel of $j^*$ is immediate. To deal with the cokernel, we need to show that an iterated extension of finite and unipotent commutative group schemes is itself a finite extension of a unipotent group. This follows from general results on the structure of commutative group schemes over fields of characteristic zero, such as \cite[Thm.~2.9(ii)]{brion-up-to-isogeny}; the key fact is that a connected commutative group admitting a finite map into a unipotent group is itself unipotent.

  Reducing an arbitrary algebraically closed field to an uncountable one is easy: the properties of being connected, being finite over the base field, and being a unipotent group scheme are all stable under the extension of algebraically closed fields, see, e.g.,~\cite[Ex.~XVII, Prop.~1.2]{sga3-2}.
\end{proof}

By the structure theory of commutative group schemes, each connected commutative group scheme has the largest abelian variety quotient~\cite[Lem.~4.5]{brion-up-to-isogeny}, referred to as the Albanese group scheme. Note that some sources use the term Albanese group scheme in a similar context to refer to the largest \textit{semi-abelian} quotient, i.e., an extension of an abelian variety by a group of multiplicative type; we don't consider groups of multiplicative type here.

\begin{corollary}
  \label{cor:picard_albanese_isogeny}
  Let $\kk$ be an algebraically closed field of characteristic zero. Let $j\colon Z \monoarrow \widetilde{Z}$ be an infinitesimal thickening of proper schemes. Consider the restriction morphism
  \[
    j^*\colon \Pic^0(\widetilde{Z}) \to \Pic^0(Z)
  \]
  on the connected components of Picard schemes. The induced map of Albanese group schemes
  \[ \Alb(\Pic^0(\widetilde{Z})) \to \Alb(\Pic^0(Z)) \]
  is an isogeny, i.e., a surjective map with finite kernel.
\end{corollary}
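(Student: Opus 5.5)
We deduce the corollary from Proposition~\ref{prop:picard_difference_is_unipotent} together with the structure theory of connected commutative algebraic groups over an algebraically closed field of characteristic zero. Write $P := \Pic^0(\widetilde{Z})$ and $Q := \Pic^0(Z)$, both connected commutative group schemes of finite type. Let $\varphi = j^*\colon P \to Q$ and let $\bar\varphi\colon \Alb(P) \to \Alb(Q)$ be the induced map. It exists by the universal property of the largest abelian variety quotient: the composite $P \to Q \to \Alb(Q)$ factors through $\Alb(P)$.

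\emph{Surjectivity.} By Proposition~\ref{prop:picard_difference_is_unipotent} the cokernel $Q/\varphi(P)$ is an extension of a unipotent group $U$ by a finite group $F$. Since $Q$ is connected, the cokernel is connected, and the plan is to show it maps trivially to every abelian variety. Consider the image $B$ of $\varphi(P)$ in $\Alb(Q)$; it is a closed connected subgroup, so an abelian subvariety. The quotient $\Alb(Q)/B$ is an abelian variety and a quotient of $Q/\varphi(P)$. Any homomorphism from a connected group that is a finite extension of a unipotent group to an abelian variety is trivial. To see this, the image of the unipotent part $U$ is unipotent, hence trivial in an abelian variety, so the map factors through a finite group. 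But a connected source then has trivial image. Here I would use that the connected cokernel is itself unipotent, via \cite[Thm.~2.9(ii)]{brion-up-to-isogeny} as in the proof of the proposition, which makes the argument cleaner. Hence $B = \Alb(Q)$, so $\bar\varphi$ is surjective.

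\emph{Finite kernel.} This is the step I expect to be the main obstacle, since kernels of induced maps on quotients are less formal. By Proposition~\ref{prop:picard_difference_is_unipotent}, $\ker\varphi$ is unipotent, so $P$ is an extension $0 \to N \to P \to \varphi(P) \to 0$ with $N$ unipotent. First I would show that $\Alb(P) \to \Alb(\varphi(P))$ is an isomorphism. Let $L$ be the kernel of $P \to \Alb(P)$; by the structure theory \cite[Lem.~4.5]{brion-up-to-isogeny} it is affine, and over characteristic zero it is the smallest subgroup with abelian variety quotient. The image of $N$ in $\Alb(P)$ is unipotent, hence trivial, so $N \subset L$. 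Therefore $\Alb(P) = P/L = \varphi(P)/(L/N)$, and this agrees with $\Alb(\varphi(P))$ by the universal property. Next we must compare $\Alb(\varphi(P))$ with $\Alb(Q)$ for the closed connected subgroup $\varphi(P) \subset Q$ of cofinite-unipotent type. Let $L_Q$ be the affine kernel of $Q \to \Alb(Q)$. Then the kernel of $\Alb(\varphi(P)) \to \Alb(Q)$ is $(\varphi(P)\cap L_Q)/L_{\varphi(P)}$. I would argue dimensions: $\dim \Alb(Q) = \dim Q - \dim L_Q$, and $Q/\varphi(P)$ is affine, so $L_Q$ surjects onto $(Q/\varphi(P))^0$ up to finite index. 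This gives $\dim(\varphi(P)\cap L_Q) = \dim L_Q - \dim Q + \dim \varphi(P)$. Combined with surjectivity, we get $\dim \Alb(\varphi(P)) \ge \dim\Alb(Q)$, while $\varphi(P)\cap L_Q$ is affine. The quotient $(\varphi(P)\cap L_Q)/L_{\varphi(P)}$ is then both affine and a subgroup of an abelian variety, hence finite. Equivalently, $\bar\varphi$ is a surjection of abelian varieties with an affine kernel, which is therefore finite, i.e., an isogeny.
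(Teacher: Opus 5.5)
Your proof is correct, but it takes a different route from the paper. The paper cites Brion's result that the Albanese functor on connected commutative groups is exact up to isogeny. Since $\Alb$ kills the unipotent kernel and the finite-by-unipotent cokernel from Proposition~\ref{prop:picard_difference_is_unipotent}, the induced map has finite kernel and finite cokernel, and the cokernel vanishes because $\Alb(\Pic^0(Z))$ is connected. You instead reprove the needed special case of that exactness directly from Chevalley's structure theorem. Surjectivity holds because every abelian-variety quotient of the affine cokernel $Q/\varphi(P)$ is trivial. For the kernel, you factor $\varphi$ through its image and show $\Alb(P)\iso\Alb(\varphi(P))$, since the unipotent kernel lies in $P_{\mathrm{aff}}$. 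You then observe that the kernel $(\varphi(P)\cap L_Q)/L_{\varphi(P)}$ of $\Alb(\varphi(P))\to\Alb(Q)$ is an affine closed subgroup of an abelian variety, hence finite. Your argument is self-contained, handles unipotent (rather than finite) kernel and cokernel head-on, and shows slightly more: $\Alb$ of any inclusion of a closed connected subgroup has finite kernel. The paper's version is shorter at the cost of a black box. Two small remarks. First, the dimension count in your last step is superfluous, since affineness of $(\varphi(P)\cap L_Q)/L_{\varphi(P)}$ alone forces finiteness. Second, in the Proposition's extension the finite group is the subgroup and the unipotent group is the quotient, so ``the image of the unipotent part'' does not parse as written. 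Your fallback fixes this: the connected cokernel is unipotent, or simply affine, and that suffices.
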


\begin{proof}
  By \cite[Prop.~4.8]{brion-up-to-isogeny} the Albanese functor on connected commutative group schemes is exact up to isogeny, meaning that a complex of connected commutative group schemes whose cohomology group schemes are finite transforms to a complex of abelian group schemes whose cohomology group schemes are finite after passing to Albanese group schemes. Since the Albanese scheme of a unipotent group scheme is zero, Proposition~\ref{prop:picard_difference_is_unipotent} implies that the map
  \[ \Alb(\Pic^0(\widetilde{Z})) \to \Alb(\Pic^0(Z)) \]
  has finite kernel and finite cokernel. However, since the group scheme $\Alb(\Pic^0(Z))$ is connected, it does not admit a nontrivial map into a finite group scheme. Thus the cokernel is zero.
\end{proof}

In fact Brion proved a similar exactness up to isogeny statement for the group schemes of multiplicative type as well~\cite[Prop.~4.4]{brion-up-to-isogeny}, which implies that the restriction morphism~$\Pic^0(\widetilde{Z}) \to \Pic^0(Z)$ is surjective modulo unipotent groups. We omit this stronger argument here. For the sake of completeness, let us show an explicit example where the cokernel is indeed a nontrivial unipotent group. There are simple examples where~$Z$ itself is a non-reduced scheme, but we present below an example in which~$Z$ is reduced and irreducible.

In order to do this, recall some basic notions from deformation theory. We say that a square-zero extension $j\colon Z \monoarrow \widetilde{Z}$ is \textit{locally trivial} if Zariski-locally there exists an isomorphism of sheaves of algebras $\OO_{\widetilde{Z}} \iso \OO_{Z} \oplus \mathcal{I}$ (with the natural square-zero multiplication structure on the right-hand side). Since algebra automorphisms of the direct sum $\OO_Z \oplus \mathcal{I}$ that commute with the quotient map $\OO_Z \oplus \mathcal{I} \epiarrow \OO_Z$ are easily seen to be derivations of $\OO_Z$ with values in the coherent sheaf $\mathcal{I}$, by a Cech cohomology argument we obtain that locally trivial square-zero extensions with ideal sheaf $\mathcal{I}$ are classified by the space $\Ext^1(\Omega^1_Z, \mathcal{I})$.

\begin{numberedexample}
  Let~$X$ be a proper variety with a class~$h \in H^{1}(X, \OO_X)$ such that the morphism
  \[ H^{1}(X, \OO_X) \xrightarrow{d} H^{1}(X, \Omega^1_X) \]
  induced by the Kahler differential map~$\OO_{X} \to \Omega^1_{X}$ sends~$h$ to a nonzero element~$d(h)$. Let~$Y$ be a proper variety with~$H^1(Y, \OO_Y) \neq 0$. Consider the product variety~$Z := X \times Y$ with projection morphisms~$\pi_X\colon X \times Y \to X$ and~$\pi_Y\colon X \times Y \to Y$. Let~$\xi \in \Ext^1_{Z}(\Omega^1_Z, \Omega^1_Z)$ be the image of the class~$\pi_X^*(h) \otimes \mathrm{id}_{\Omega^1_Z}$ under the multiplication map
  \[ H^1(Z, \OO_Z) \otimes \Hom_{Z}(\Omega^1_Z, \Omega^1_Z) \to \Ext^1_{Z}(\Omega^1_Z, \Omega^1_Z). \]
  Let~$j\colon Z \monoarrow \widetilde{Z}$ be the locally trivial square-zero thickening of~$Z$ with the ideal sheaf~$\mathcal{I} := \Omega^1_Z$ defined via the class~$\xi$. Then the map~$\Pic^0(\widetilde{Z}) \xrightarrow{j^*} \Pic^0(Z)$ has a positive-dimensional cokernel.
\end{numberedexample}

\begin{remark}
  We can take $X$ to be a cuspidal cubic curve $C = \{ y^2 z = x^3 \} \subset \PP^2$: using the standard affine cover
  \[ \begin{aligned}
    U & = \Spec \kk[t^{-1}]   &                                           \\
      &                     & \quad \supset U \cap V = \Spec \kk[t, t^{-1}] \\
    V & = \Spec \kk[t^2, t^3]
  \end{aligned} \]
  one can check that the class $h \in H^1(C, \OO_C)$ represented by the Cech cocycle
  \[ t \in \Gamma(U \cap V, \OO_{U \cap V}), \]
  is sent to
  the element $\mathrm{d} t \in \Gamma(U \cap V, \Omega^1_{U \cap V})$ which is not a coboundary and thus represents a nonzero class $d(h) \in H^1(C, \Omega^1_C)$. We can take $Y$ to be a smooth genus one curve to get a two-dimensional example $Z \caniso C \times Y$.
\end{remark}

\begin{proof}
  First, let us consider a general setting. Let~$\xi \in \Ext^1(\Omega^1_Z, \mathcal{I})$ be some class and let~$j\colon Z \monoarrow \widetilde{Z}$ be the corresponding locally trivial square-zero thickening. The short exact sequence
  \[ 0 \to \mathcal{I} \to \OO_{\widetilde{Z}} \to \OO_Z \to 0 \]
  of coherent sheaves on~$\widetilde{Z}$ induces an exact sequence
  \[ H^{1}(\widetilde{Z}, \OO_{\widetilde{Z}}) \to H^1(Z, \OO_Z) \to H^2(Z, \mathcal{I}). \]
  Since the first cohomology space of the structure sheaf is canonically identified with the tangent space to the Picard scheme~\cite[Thm.~5.11]{kleiman-picard}, the map on Picard schemes has a positive-dimensional cokernel if and only if the map
  \[ H^1(Z, \OO_Z) \to H^2(Z, \mathcal{I}) \]
  of vector spaces is nonzero. Using the local Cech cover argument one can check that this map is nothing but the multiplication by the class~$d^*(\xi) \in H^1(Z, \mathcal{I})$ obtained as the pullback of~$\xi \in \Ext^1(\Omega^1_Z, \mathcal{I})$ along the Kahler differential morphism~$d\colon \OO_Z \to \Omega^1_Z$.

  Since~$Z \caniso X \times Y$ is a direct product, there exists an isomorphism
  \[ \Omega^1_Z \caniso \pi_X^* \Omega^1_X \oplus \pi_Y^* \Omega^1_Y. \]
  By the projection formula we have an isomorphism
  \[
    H^*(Z, \Omega^1_Z) \caniso (H^*(X, \Omega^1_X) \otimes H^*(Y, \OO_Y)) \oplus (H^*(X, \OO_X) \otimes H^*(Y, \Omega^1_Y)).
  \]
  We want to show that the multiplication by~$d(\xi)$ induces a nonzero morphism
  \[ H^1(Z, \OO_Z) \to H^2(Z, \Omega^1_Z). \]
  From the definition of~$\xi$ in terms of the class~$h \in H^1(X, \OO_X)$ it is easy to check that the composition
  \[
    H^0(X, \OO_X) \otimes H^1(Y, \OO_Y) \monoarrow H^1(Z, \OO_Z) \xrightarrow{d(\xi)} H^2(Z, \Omega^1_Z) \epiarrow H^1(X, \Omega^1_X) \otimes H^1(Y, \OO_Y)
  \]
  is equal to the map
  \[ H^0(X, \OO_X) \otimes H^1(Y, \OO_Y) \xrightarrow{d(h) \otimes \mathrm{id}} H^1(X, \Omega^1_X) \otimes H^1(Y, \OO_Y), \]
  which is nonzero by the assumptions that~$d(h) \neq 0$ and~$H^1(Y, \OO_Y) \neq 0$.
\end{proof}

\section{Proof of the main theorem}
\label{sec:main_proof}

In this section we prove the main result of this note. Recall that if~$X$ is a smooth proper variety, for an admissible subcategory~$\cA \subset \Dbcoh(X)$ the \textit{support} of~$\cA$ is defined as~$\supp(\cA) := \cup_{A \in \cA} \supp(A)$; it is a Zariski closed subset since it equals the support of any classical generator of~$\cA$. See \cite{pirozhkov-curve-supported} for a more detailed discussion.

\begin{theorem}
  \label{thm:main_theorem}
  Let $X$ be a smooth proper variety over an algebraically closed field $\kk$ of characteristic zero and let $\cA \subset \Dbcoh(X)$ be an admissible subcategory. Denote the (set-theoretical) support of $\cA$ by $Z \subset X$ and assume that $Z \neq X$. Then for any morphism $g\colon Z \to G$ from $Z$ to an abelian variety $G$ each fiber is a union of positive-dimensional connected components. Moreover, if $Z^\prime \subset Z$ is an irreducible component whose intersection with other irreducible components of $Z$ is zero-dimensional, then there are no generically finite morphisms from $Z^\prime$ to an abelian variety.
\end{theorem}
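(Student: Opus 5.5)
We will argue by contradiction, following the strategy of \cite[Thm.~6.5]{pirozhkov-curve-supported}, with Corollary~\ref{cor:picard_albanese_isogeny} replacing the one-dimensional fact that every line bundle on a curve lifts to every thickening. Suppose some fiber $g^{-1}(p)$ has an isolated point $z \in Z$. The first step is to pass to a dual abelian variety and line bundles. The pullback $g^*\colon \Pic^0(G) = G^\vee \to \Pic^0(Z)$ is a homomorphism from an abelian variety. Its composition with $\Pic^0(Z) \to \Alb(\Pic^0(Z))$ is an isogeny onto its image, provided $g^*$ has finite kernel. We can reduce to this case by replacing $G$ with the abelian subvariety generated by $g(Z)$, after translating so that $g(z)=0$. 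Corollary~\ref{cor:picard_albanese_isogeny} then applies to every infinitesimal thickening $Z \subset Z_n$, for instance the $n$-th order neighborhood of $Z$ in $X$. It says that the image of $\Pic^0(Z_n) \to \Pic^0(Z)$ contains a subgroup mapping isogenously onto $\Alb(\Pic^0(Z))$. Hence, up to finite index and modulo unipotent parts, every line bundle $g^*L$ with $L \in G^\vee$ lifts to each $Z_n$.

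The second step is to use $\Pic^0$-invariance. The machinery of \cite{pirozhkov-curve-supported} generalizes Kawatani--Okawa \cite{kawatani-okawa}: if a line bundle on $Z$ extends compatibly to the formal neighborhood $\widehat{Z}$ of $Z$ in $X$ (or to sufficiently high thickenings), and the extension deforms in a connected family through $\OO$, then tensoring by it preserves $\cA$. This is because $\cA$ is supported on $Z$ and its objects are determined by finite thickenings. Tensoring then acts on $\cA$ trivially up to an autoequivalence continuously connected to the identity. The consequence we want is as follows. For a classical generator $E \in \cA$, the object $E \otimes \mathcal{L}$ is isomorphic to $E$ for all $\mathcal{L}$ in a positive-dimensional connected family of lifts of elements of $g^*G^\vee$, at least up to a finite subgroup. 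The compatibility of lifts across all $n$ is delicate, because the images of $\Pic^0(Z_n)$ form a decreasing chain of closed subgroups. Since they all surject onto the same Albanese up to isogeny, however, the chain stabilizes up to finite and unipotent pieces, and a Mittag-Leffler argument produces compatible systems.

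The third step localizes at $z$. Take an object $A \in \cA$ whose support contains $z$. Let $A_z$ be its summand supported on a small neighborhood of the isolated component of $g^{-1}(0)$ containing $z$; this is possible by a local-to-global decomposition along the fiber. Consider the Fourier--Mukai type functor $\Phi(A) = R(g \circ \mathrm{supp})_* A \in \Dbcoh(G)$. Invariance $A \otimes g^*L \simeq A$ for $L$ in a positive-dimensional subgroup of $G^\vee$ gives $\Phi(A) \otimes L \simeq \Phi(A)$. The component of $\Phi(A)$ near $0 \in G$ has zero-dimensional, nonempty support, coming from the isolated point $z$, and it is nonzero after taking a suitable cohomology sheaf. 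Yet a nonzero sheaf with finite support on $G$ is invariant under $\otimes L$ for every $L$, so this alone gives no contradiction. The contradiction must instead come from the Kawatani--Okawa mechanism. There, invariance of an admissible subcategory under $\Pic^0(G)$ forces its objects, after pushforward, to be supported on unions of fibers of the Albanese map. Equivalently, the semi-orthogonal projection of skyscrapers commutes with translations on the dual side, and a point-supported piece cannot be a direct summand of a $G^\vee$-equivariant family unless it is dense in the relevant fiber. This is exactly the argument of \cite[Thm.~6.5]{pirozhkov-curve-supported} with $\Pic^0$ of a curve replaced by $g^*G^\vee$, and we expect it to transfer verbatim. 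For the second statement, a generically finite $g\colon Z' \to G$ has an isolated point in a general fiber. Since $Z' \cap \overline{Z\setminus Z'}$ is finite, we can extend $g$ near $Z'$ by working on the connected component of $Z'$ away from that finite set, or use the localized version of the argument around $z \in Z'$ general, to reach the same contradiction.

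The main obstacle is the second step: producing compatible lifts of $g^*L$ to the whole formal neighborhood, together with the invariance statement, given that Corollary~\ref{cor:picard_albanese_isogeny} only controls the Albanese quotients up to isogeny. We would first try to show that the images $\im(\Pic^0(Z_n) \to \Pic^0(Z))$ all contain a fixed abelian subvariety isogenous to the image of $G^\vee$. This uses Poincaré reducibility and the fact that abelian subvarieties of a commutative group have no unipotent part.
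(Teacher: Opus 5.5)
There is a genuine gap. Your third step is supposed to produce the contradiction, and it does not.

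You observe yourself that after pushing forward to $G$, a point-supported sheaf is invariant under every twist. The replacement you offer (``a point-supported piece cannot be a direct summand of a $G^\vee$-equivariant family unless it is dense in the relevant fiber'', expected to transfer verbatim) is not an argument. There are further problems with this step:
\begin{itemize}
\item The pushforward is not even well defined. The map $g$ lives only on $Z$, while objects of $\cA$ live on nonreduced thickenings to which $g$ need not extend.
\item The ``summand $A_z$ supported near the isolated component of the fiber'' presupposes exactly what has to be proved. The whole point is to show that the support does not run along a curve through $z$ transverse to the fiber.
\item You never use the hypothesis $Z \neq X$, which is essential. For $\cA = \Dbcoh(G)$ and $g = \mathrm{id}_G$, every fiber is an isolated point.
\end{itemize}
Step 2 also has problems. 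It is not a classical generator that is twist-invariant: the subcategory is invariant, but its generators need not be. Moreover, no formal neighborhood or Mittag-Leffler argument is needed.

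The missing argument is local and curve-based.
\begin{itemize}
\item Take the projection $A_z \in \cA$ of the skyscraper $\OO_z$. By \cite[Cor.~3.4, Prop.~6.3]{pirozhkov-curve-supported}, $A_z \iso i_*E$ for some $E$ on a single finite thickening $\widetilde Z \supset Z$, with every cohomology sheaf of $E$ invariant under $\Pic^0(\widetilde Z)$.
\item Suppose $z$ were not isolated in $\supp(E)$. Then some $\mathcal{H}^m(E)$ would contain a curve $C \ni z$ in its support. Pull it back to the normalization $\widetilde C$ to get a sheaf with full support, invariant under the image of $\Pic^0(\widetilde Z) \to \Pic^0(\widetilde C)$.
\item This image is positive-dimensional. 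Since $\Pic^0(\widetilde C)$ is an abelian variety, the map factors through $\Alb(\Pic^0(\widetilde Z)) \to \Alb(\Pic^0(Z))$, which is surjective by Corollary~\ref{cor:picard_albanese_isogeny}. The map $\Pic^0(Z) \to \Pic^0(\widetilde C)$ is nonzero: $\widetilde C \to G$ is finite because $z$ is isolated in its fiber, so $\Pic^0(G) \to \Pic^0(\widetilde C)$ is nonzero, and it factors through $\Pic^0(Z)$.
\item A full-support sheaf on a smooth curve cannot be invariant under a positive-dimensional family of twists; look at the determinant of its torsion-free quotient. Hence $z$ is an isolated point of $\supp(A_z)$.
\item The contradiction comes from \cite[Lem.~3.10]{pirozhkov-curve-supported}: when $Z \neq X$, the support of every object of $\cA$ is a union of positive-dimensional connected components.
\end{itemize}

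For the second claim, you cannot extend $g$ from $Z'$ to $Z$. Instead, use that $\Pic^0(Z) \to \Pic^0(Z')$ is surjective: glue with the trivial bundle on the other components, in families, as in \cite[Lem.~4.6]{pirozhkov-curve-supported}. Then run the same argument at a point of $Z'$ that lies off the other components and where $g$ is finite.
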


In particular, the support of an admissible subcategory can never be isomorphic to an abelian variety. The second claim of the theorem does not follow from the first since a generically finite morphism~$Z^\prime \to G$ to an abelian variety cannot, in principle, be extended to a well-defined morphism~$Z \to G$ even if the intersection of~$Z^\prime$ with other irreducible components is a finite set of points.

\begin{proof}
  As noted in the introduction, this is a direct generalization of \cite[Thm.~6.5]{pirozhkov-curve-supported} using Corollary~\ref{cor:picard_albanese_isogeny} instead of \cite[Lem.~4.5]{pirozhkov-curve-supported}. The proof proceeds via a close examination of the properties of the projections of skyscraper sheaves to the subcategory~$\cA \subset \Dbcoh(X)$. Let~$p \in Z \subset X$ be a point and let $\OO_p \in \Dbcoh(X)$ be the skyscraper sheaf at $p$. Consider the projection triangle for $\OO_p$:
  \[ B_p \to \OO_p \to A_p \to B_p [1], \]
  where~$A_p \in \cA$ and~$B_p \in \emptyperp \cA$. As explained in~\cite[Lem.~3.8]{pirozhkov-curve-supported}, the object~$A_p$ is not zero since the point~$p$ lies in the support of~$\cA$; in fact, the support of~$A_p$ contains the point~$p$. By~\cite[Cor.~3.4]{pirozhkov-curve-supported} there exists a closed subscheme~$i\colon \widetilde{Z} \monoarrow X$ whose reduced part is contained in~$Z$ and an object~$E \in \Dbcoh(\widetilde{Z})$ such that~$A_p \iso i_*(E)$. By \cite[Prop.~6.3]{pirozhkov-curve-supported} we know that all cohomology sheaves of~$E$ are invariant under the action of the group scheme~$\Pic^0(\widetilde{Z})$. We denote by~$j\colon Z \monoarrow \widetilde{Z}$ the embedding of the reduced subscheme.

  We start by considering the first claim of the theorem. Assume that there exists a morphism~$g\colon Z \to G$ to an abelian variety~$G$ and a point~$p \in Z$ which is an isolated point of the fiber~$g^{-1}(g(p)) \subset Z$. Then we claim that for any object~$E \in \Dbcoh(\widetilde{Z})$ whose support contains~$p$ and whose cohomology sheaves are~$\Pic^0(\widetilde{Z})$-invariant the point~$p$ is an isolated point of~$\supp(E)$. We argue by contradiction. If~$p \in \supp(E)$ is not an isolated point, then for some~$m \in \ZZ$ the support of the cohomology sheaf~$\mathcal{H} := \mathcal{H}^{m}(E) \in \Coh(\widetilde{Z})$ contains a positive-dimensional irreducible component passing through~$p$. Then there exists a reduced and irreducible proper curve~$C \subset Z$ such that~$p \in C$ and (set-theoretically)~$C \subset \supp(\mathcal{H})$. Let~$\widetilde{C} \to C$ be the resolution of singularities of the curve, and denote by~$c\colon \widetilde{C} \to Z$ the composed map to~$Z$. Note that the morphism~$c$ is finite. Let~$\mathcal{F} \in \Coh(\widetilde{C})$ be the non-derived pullback of~$\mathcal{H}$ along the composition of finite morphisms
  \[ \widetilde{C} \xrightarrow{c} Z \monoarrow^{j} \widetilde{Z}. \]
  By construction~$\supp(\mathcal{F}) = \widetilde{C}$. Since, as explained above, the sheaf~$\mathcal{H} \in \Coh(\widetilde{Z})$ is~$\Pic^0(\widetilde{Z})$-invariant, its pullback~$\mathcal{F} \in \Coh(\widetilde{C})$ is invariant under twists by the line bundles in the image of the composition
    \begin{equation}
      \label{eq:picard_restriction_to_curve}
      \Pic^0(\widetilde{C}) \xleftarrow{c^*} \Pic^0(Z) \xleftarrow{j^*} \Pic^0(\widetilde{Z}).
    \end{equation}

  Let us show that this is a nontrivial constraint on the sheaf~$\mathcal{F}$, i.e., that the restriction map~$\Pic^0(\widetilde{Z}) \to \Pic^0(\widetilde{C})$ is not zero. First, observe that since~$\widetilde{C}$ is a smooth proper curve, the group scheme~$\Pic^0(\widetilde{C})$ is an abelian variety. Thus, by the universal property of the Albanese group scheme, the morphism~(\ref{eq:picard_restriction_to_curve}) factors as
  \[ \Pic^0(\widetilde{C}) \leftarrow \Alb(\Pic^0(Z)) \leftarrow \Alb(\Pic^0(\widetilde{Z})) \leftarrow \Pic^0(\widetilde{Z}). \]
  The rightmost map is surjective by definition, and Corollary~\ref{cor:picard_albanese_isogeny} shows that the middle map is surjective as well. Thus the composition~(\ref{eq:picard_restriction_to_curve}) is nonzero if and only if the map~$\Alb(\Pic^0(Z)) \to \Pic^0(\widetilde{C})$ is nonzero, or, equivalently, if~$\Pic^0(Z) \xrightarrow{c^*} \Pic^0(\widetilde{C})$ is nonzero. To prove this, we use the assumption that~$p$ is an isolated point of the fiber of~$g\colon Z \to G$. Consider a sequence of morphisms
  \[ \widetilde{C} \xrightarrow{c} Z \xrightarrow{g} G. \]
  Since the map~$c\colon \widetilde{C} \to Z$ is not constant and its image contains the point~$p \in Z$ which is an isolated point in the fiber of~$g$, the map~$\widetilde{C} \to G$ is finite. A finite map from a smooth projective curve~$\widetilde{C}$ to an abelian variety~$G$ induces a nonzero map~$\Pic^0(G) \to \Pic^0(\widetilde{C})$ by the universal property of Albanese variety of~$\widetilde{C}$. Since this nonzero map factors through the map~$c^*\colon \Pic^0(Z) \to \Pic^0(\widetilde{C})$, the morphism~$c^*$ is also nonzero. Hence the composition~(\ref{eq:picard_restriction_to_curve}) is also not zero. The image of a nonzero homomorphism of connected group schemes is necessarily a positive-dimensional subgroup scheme of~$\Pic^0(\widetilde{C})$.

  To reach the desired contradiction with the assumption that $p \in \supp(E)$ is not an isolated point, it remains to show that a coherent sheaf $\mathcal{F} \in \Coh(\widetilde{C})$ with full support $\supp(\mathcal{F}) = \widetilde{C}$ cannot be invariant under any positive-dimensional subset of $\Pic^0(\widetilde{C})$. The argument from~\cite[Lem.~6.6]{pirozhkov-curve-supported} applies: the determinant of the torsion-free quotient of $\mathcal{F}$ would be a line bundle on $\widetilde{C}$ invariant under a positive-dimensional subset of $\Pic^0(\widetilde{C})$, which is impossible. Thus we conclude that the support of $\mathcal{F}$ must be a finite union of points, which contradicts the assumption that $C \subset \supp(E)$ is an irreducible curve passing through the point $p$.

  Now we are done: if~$p \in \supp(E)$ is an isolated point, then the support of the object~$A_p \iso i_*(E) \in \cA \subset \Dbcoh(X)$ also includes~$p$ as an isolated point. However, since we have assumed that~$Z = \supp(\cA)$ is strictly smaller than~$X$, for any object in the admissible subcategory~$\cA \subset \Dbcoh(X)$ the set-theoretical support is a union of positive-dimensional connected components by~\cite[Lem.~3.10]{pirozhkov-curve-supported}. This is a contradiction. We conclude that the fibers of any morphism~$g\colon Z \to G$ to an abelian variety have no isolated points.

  The second claim of the theorem is a simple variation of the argument above. Assume that~$Z^\prime \subset Z$ is an irreducible component which intersects all other irreducible components of~$Z$ in a zero-dimensional subset. Let~$f\colon Z^\prime \monoarrow Z$ be the closed embedding. Pick a point~$p \in Z^\prime$ that does not lie on other irreducible components and such that the map~$g\colon Z^\prime \to G$ is finite at~$p$. Then to adapt the argument above we only need to show that the restriction morphism
  \[ f^*\colon \Pic^0(Z) \to \Pic^0(Z^\prime) \]
  induces a surjection on the Albanese group schemes. However, as explained in \cite[Lem.~4.6]{pirozhkov-curve-supported}, the map~$f^*$ itself is surjective: any line bundle~$L$ on~$Z^\prime$ can be glued with a trivial line bundle on the union of other irreducible components of~$Z$, and this gluing trick can be performed in families, which implies the surjectivity on the connected components of the Picard scheme. Since passing to the Albanese group scheme is an exact-up-to-isogeny functor, the map~$\Alb(f^*)\colon \Alb(\Pic^0(Z)) \to \Alb(\Pic^0(Z^\prime))$ is also surjective. The conclusion we would get is that the support of the object~$E$ contains~$p$ as an isolated point, which is also ruled out by~\cite[Lem.~3.10]{pirozhkov-curve-supported}.
\end{proof}

\begin{remark}
  The condition that $Z$ admits a finite morphism into an abelian variety does not impose any structural constraints on the group scheme $\Pic^0(Z)$. It is easy to construct examples where $\Pic^0(Z)$ has nontrivial unipotent and multiplicative components.
\end{remark}

To indicate the limits on what can be proved about the support of an admissible subcategory, we present an example where an admissible subcategory is supported on a non-uniruled smooth subvariety.

\begin{numberedexample}
  \label{ex:admissible_supported_at_enriques}
  Let~$S$ be an Enriques surface and let~$\mathcal{L}$ be an acyclic line bundle on~$S$, i.e., a line bundle satisfying~$H^*(S, \mathcal{L}) = 0$. Such acyclic line bundles exist on any Enriques surface~(\cite[Thm.~2]{zube-enriques} or \cite[Prop.~2.1]{borisov-nuer}); if~$S$ contains a smooth rational~$(-2)$-curve~$C$, then the line bundle~$\mathcal{L} := \OO_{S}(-C)$ works. Consider the embedding~$i\colon S \monoarrow X := \PP_{S}(\mathcal{L} \oplus \OO_S)$ into the completed total space of the line bundle~$\mathcal{L}$. This is a smooth hypersurface in a smooth threefold with normal bundle isomorphic to~$\mathcal{L}$. Then using the acyclicity of~$\mathcal{L}$ one can easily check that
  \[ \RHom_{X}(i_* \OO_S, i_* \OO_S) \caniso \RHom_{S}(i^* i_* \OO_S, \OO_S) \caniso \RGamma(S, \OO_S) \caniso \kk [0]. \]
  Thus~$i_* \OO_S \in \Dbcoh(X)$ is an exceptional object. It generates an admissible subcategory of~$\Dbcoh(X)$ whose support is the Enriques surface~$S \subset X$.
\end{numberedexample}

\printbibliography
\end{document}